\newtheorem{theorem}{Theorem}[section]
\newtheorem{lemma}{Lemma}[section]
\newtheorem{proposition}{Proposition}[section]
\newtheorem{definition}{Definition}[section]
\newtheorem{remark}{Remark}[section]
\newcommand{\bal}{\begin{align}}
\newcommand{\bbal}{\begin{align*}}
\newcommand{\beq}{\begin{equation}}
\newcommand{\eeq}{\end{equation}}
\newcommand{\bca}{\begin{cases}}
\newcommand{\eca}{\end{cases}}
\def\div{\mathord{{\rm div}}}
\newcommand{\pa}{\partial}
\newcommand{\fr}{\frac}
\newcommand{\dd}{\mathrm{d}}
\newcommand{\R}{\mathbb{R}}
\newcommand{\les}{\lesssim}
\newcommand\f{\left}
\newcommand\g{\right}
\begin{document}
\title{Ill-posedness of the hyperbolic Keller-Segel model in Besov spaces }

\author{Xiang Fei, Yanghai Yu and Mingwen Fei\\
\small School of Mathematics and Statistics, Anhui Normal University, Wuhu 241002, China\footnote{E-mail: fx19970912@163.com(X. Fei); yuyanghai214@sina.com(Y. Yu); mwfei@ahnu.edu.cn(M. Fei)}
}

\date{\today}

\maketitle\noindent{\hrulefill}

{\bf Abstract:} In this paper, we give a new construction of $u_0\in B^s_{p,\infty}$ such that the
corresponding solution to the hyperbolic Keller-Segel model starting from $u_0$ is
discontinuous at $t = 0$ in the metric of $B^s_{p,\infty}(\R^d)$ with $s>1+\fr{d}{p}$, $d\geq1$ and $1\leq p\leq\infty$, which implies the ill-posedness for this equation in $B^s_{p,\infty}$. Our result generalizes the recent work in \cite{Zhang01} (J. Differ. Equ. 334 (2022)) where the case $d=1$ and $p=2$ was considered.

{\bf Keywords:} Keller-Segel system, ill-posedness, Besov spaces.

{\bf MSC (2010):} 35K15, 35Q92.
\vskip0mm\noindent{\hrulefill}

\section{Introduction}
   Chemotaxis is the active motion of organisms influenced by
chemical gradients. The most prominent model for this process goes back to Patlak, Keller and Segel \cite{Patlak01, Keller01,Keller02, Keller03}
which takes the form of
\begin{equation}\label{ame}
       \begin{cases}
        \partial_{t}u+ \nabla \cdot(D_{1}(u,S)\nabla u-\chi(u,S)\nabla S)=0,\\
        \tau S_{t}=D_{2} \Delta S+k(u,S),
       \end{cases}
     \end{equation}
here $u(x,t)$ represents the cell density at position $x \in \mathbb{R}^{d}$, time $t > 0$, and $S(x,t)$ is the concentration of a chemical signal. The motility $D_{1}(u, S)$ and the chemotactic sensitivity $\chi(u, S)$
rely on the cell density and on the signal concentration. The term $k(u, S)$ depicts production
and decay or consumption of the signal and $D_{2}$ is the diffusion constant for $S$. The parameter
$\tau$ illustrates that movement of the species and dynamics of the signal have different characteristic time scales. The Keller-Segel model has been applied to many different problems, ranging from bacteria
chemotaxis to cancer growth or the immune response.

Dolak and Schmeiser \cite{Dolak02} derived a convection equation with a small diffusion term as higher order correction
from a kinetic model for chemotaxis. Inspired by this, Dolak and Schmeiser
proposed the following parabolic-type Keller-Segel equations with small diffusivity:
\begin{equation}\label{wme}
       \begin{cases}
        \partial_{t}u=- \nabla \cdot(u(1-u)\nabla S-\epsilon\nabla u), &\text{in}\quad \mathbb{R}^{+}\times\mathbb{R}^{d},\\
       -\Delta S=u - S,                           &\text{in}\quad \mathbb{R}^{+}\times\mathbb{R}^{d}.      \\
       \end{cases}
     \end{equation}
Burger, Dolak and Schmeiser \cite{Burger02} studied the asymptotic behavior of solutions of the chemotaxis model \eqref{wme} in multiple spatial dimensions. Of particular interest is the practically relevant case of small diffusivity, where (as in the one-dimensional case) the cell densities form
plateau-like solutions for large time. Some other results related to \eqref{wme} can be found in \cite{Tello01,Winkler01,Winkler02}.

Nie and Yuan \cite{Nie01} considered the Cauchy problem for multidimensional chemotaxis system
      \begin{equation}\label{bme}
       \begin{cases}
        \partial_{t}u-\Delta u=\div(uv), &\text{in}\quad \mathbb{R}^{+}\times\mathbb{R}^{d},\\
       \partial_{t}v-\nabla u=0,                           &\text{in}\quad \mathbb{R}^{+}\times\mathbb{R}^{d},      \:\   \\
        (u,v)|_{t=0}=(u_{0},v_{0}),              &\text{in}\quad \mathbb{R}^{d},
       \end{cases}
     \end{equation}
they proved that {(\ref{bme})} is well-posed in $\dot{B}_{p,\sigma}^{\frac{d}{p}-2}\times (\dot{B}_{p,\sigma}^{\frac{d}{p}-1})^{d}$ when $p<2d$ and is ill-posed when $p>2d$. Later  Nie and Yuan\cite{Nie02} also obtaind that {(\ref{bme})} is ill-posed in $\dot{B}_{p,1}^{\frac{d}{p}-2}\times (\dot{B}_{p,1}^{\frac{d}{p}-1})^{d}$ when $p=2d$. Almost in the same time,
 Xiao and Fei\cite{Xiao01} proved that {(\ref{bme})} is ill-posed in $\dot{B}_{p,\sigma}^{\frac{d}{p}-2}\times (\dot{B}_{p,\sigma}^{\frac{d}{p}-1})^{d}$ when $p=2d, \sigma>2$. Recently, Li, Yu and Zhu\cite{Li03} proved that  {(\ref{bme})} is ill-posed in $\dot{B}_{p,r}^{\frac{d}{p}-2}\times (\dot{B}_{p,r}^{\frac{d}{p}-1})^{d}$ when $1\leq r<d$.

 In this paper, we consider the Cauchy problem for following hyperbolic Keller-Segel equation:
      \begin{equation}\label{ame}
       \begin{cases}
        \partial_{t}u=- \nabla \cdot(u(1-u)\nabla S), &\text{in}\quad \mathbb{R}^{+}\times\mathbb{R}^{d},\\
       -\Delta S=u - S ,                          &\text{in}\quad \mathbb{R}^{+}\times\mathbb{R}^{d},      \\
        u(x,0)=u_{0}(x) ,               &\text{in}\quad \mathbb{R}^{d}.
       \end{cases}
     \end{equation}
The unknown scale functions $u(x, t)$ and $S(x, t)$ denote the cell density and the concentration of  chemical substance, respectively.
Dolak and  Schmeiser  \cite{Dolak01} firstly
established the existence and unique of global smooth solution to one dimensional version of \eqref{wme} with suitable
conditions on the initial data. On a time scale characteristic for the convective effects, they also
proved that the corresponding sequence of solutions $u^\epsilon$ converges to the weak entropy solution
$u$ to \eqref{ame} as $\epsilon\to0$.  Laterly, Burger, Difrancesco and Dolak\cite{Burger01} obtained the unique local-in-time solution to \eqref{wme} with the initial data belonging to $L^1(\R^d)\cap L^\infty(\R^d)$. Perthame and Dalibard \cite{Perthame01}  proved the existence of an entropy solution to \eqref{ame} by passing to the limit in a sequence of solutions
to the parabolic approximation. Lee and Liu\cite{Lee01} proved the sub-threshold for finite time shock formation to solutions of \eqref{ame} in one-dimension.

   Recently, Zhou, Zhang and Mu\cite{Zhou01} obtained the existence and uniqueness of solution of \eqref{ame} in ${{B}}_{p,r}^{s}(\mathbb{R}^{d})$ when $1\leq p,r\leq\infty,s>1+\frac{d}{p}$. Later,  Zhang, Mu and Zhou \cite{Zhang01} proved that \eqref{ame} is ill-posed in ${{B}}_{2,\infty}^{s}(\mathbb{R})$ with $s>\frac{3}{2}$ and \eqref{ame} is local well-posed in ${{B}}_{p,1}^{s}(\mathbb{R}^{d})$ when $1\leq p<\infty,
s=1+\frac{d}{p}$. However, their initial data seems to be valid only for $p=2$ when proving the ill-posedness in ${B}_{p,\infty}^{s}$.
Motivated by the recent works in \cite{Li01,Li02}, we aim to extend the ill-posedness result in \cite{Zhang01} to more general case, i.e, $1\leq p\leq\infty$ and $d\geq1$.
The main result of the paper is the following theorem:
\begin{theorem}\label{d1}
Let $d\geq1$. Assume that $$s>1+\frac{d}{p}\quad\text{with}\quad1\leq p\leq\infty,$$
then there exists $u_{0}\in{B}_{p,\infty}^{s}(\mathbb{R}^{d})$ and a positive constant $\epsilon_{0}$ such that the data-to-solution map $u_{0}\mapsto u$ of the Cauchy problem \eqref{ame} satisfies
\begin{align*}
\limsup_{t\to0^+}\|u(t)-u_{0}\|_{{B}_{p,\infty}^{s}}\geq \epsilon_{0}.
    \end{align*}
\end{theorem}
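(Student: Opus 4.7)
The strategy is classical high-frequency norm inflation: pick an initial datum whose low-frequency part induces a non-trivial transport field and whose high-frequency part consists of Fourier-lacunary bumps at every dyadic scale, and show that at the $n$-th Littlewood--Paley block the solution departs from the datum linearly in $t$ with amplitude proportional to $2^{n}$. Concretely, fix a Schwartz bump $\phi$ supported near the origin, a unit vector $e\in\R^{d}$, and a smooth compactly supported non-negative bump $f$, and set
\[
 u_{0}(x) = f(x)+\sum_{n\geq n_{0}}2^{-ns}\phi(x)\cos(2^{n}x\cdot e).
\]
Since consecutive summands have essentially disjoint dyadic Fourier supports, one verifies $\|u_{0}\|_{B^{s}_{p,\infty}}\lesssim 1$. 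By the Zhou--Zhang--Mu local well-posedness result \cite{Zhou01} there exist $T>0$ and a solution $u\in L^{\infty}([0,T];B^{s}_{p,\infty})$ with uniform norm bound; differentiating the equation once also gives $\|\partial_{t}u\|_{L^{\infty}([0,T];B^{s-1}_{p,\infty})}\lesssim 1$.

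Using $-\Delta S=u-S$, recast the PDE as
\[
 \partial_{t}u+(1-2u)\,\nabla S\cdot\nabla u = u(1-u)(u-S),
\]
and write the first-order Taylor expansion in time
\[
 u(t)-u_{0}=-t\bigl[(1-2u_{0})\nabla S_{0}\cdot\nabla u_{0}-u_{0}(1-u_{0})(u_{0}-S_{0})\bigr]+R(t),
\]
with $S_{0}=(I-\Delta)^{-1}u_{0}$ and $R(t)$ a quadratic-in-$t$ remainder. Since $\nabla S_{0}$ is essentially a smoothed version of $\nabla f$ near the origin, choose $f$ so that $\alpha:=(1-2f(0))\,e\cdot\nabla S_{0}(0)\neq 0$.

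The key computation is on the $n$-th dyadic block of the transport main term. The dominant part of $\nabla\bigl(2^{-ns}\phi(x)\cos(2^{n}x\cdot e)\bigr)$ is the Fourier-localized $-2^{n(1-s)}\phi(x)\sin(2^{n}x\cdot e)e$, whose spectrum lies in the $n$-th annulus. A Bony paraproduct decomposition of $(1-2u_{0})\nabla S_{0}\cdot\nabla u_{0}$ then yields, for $n$ large and $\phi$ with sufficiently small support,
\[
 \|\Delta_{n}\bigl[(1-2u_{0})\nabla S_{0}\cdot\nabla u_{0}\bigr]\|_{L^{p}}\gtrsim |\alpha|\cdot 2^{n(1-s)},
\]
so that $2^{ns}\|\Delta_{n}(u(t)-u_{0})\|_{L^{p}}\geq |\alpha|\,t\,2^{n}-2^{ns}\|\Delta_{n}R(t)\|_{L^{p}}$.

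The main technical obstacle is the companion remainder estimate
\[
 2^{ns}\|\Delta_{n}R(t)\|_{L^{p}}\lesssim (t\,2^{n})^{2}\qquad\text{whenever}\quad t\,2^{n}\leq c,
\]
obtained by iterating the expansion once more and invoking standard Besov product and commutator estimates, the derivative losses being absorbed via $\|u(\tau)-u_{0}\|_{B^{s-1}_{p,\infty}}\lesssim\tau$ (one level below the regularity threshold). Once this is in hand, for each small $t>0$ choose $n(t)$ with $t\,2^{n(t)}\in[c/2,c]$ for a fixed small $c$; then
\[
 \|u(t)-u_{0}\|_{B^{s}_{p,\infty}}\geq 2^{n(t)s}\|\Delta_{n(t)}(u(t)-u_{0})\|_{L^{p}}\geq \tfrac{|\alpha|c}{2}-Cc^{2}\geq \epsilon_{0}>0,
\]
which proves the theorem upon letting $t\to 0^{+}$. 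The endpoint cases $p\in\{1,\infty\}$ are the most delicate, since the Besov product and commutator estimates underlying the remainder bound sit at the edge of the Bony calculus and need to be applied with care.
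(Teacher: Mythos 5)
Your proposal follows essentially the same route as the paper: a lacunary initial datum whose dyadic blocks are modulated bumps, the first-order Taylor expansion $u(t)-u_{0}=-t\,v_{0}+R(t)$ with the remainder controlled by $t^{2}2^{2n}$ at the $n$-th block (obtained by estimating it in $B^{s-2}_{p,\infty}$, two derivatives below the threshold, after first proving $\|u(\tau)-u_{0}\|_{B^{s-1}_{p,\infty}}\lesssim\tau$), a lower bound $\gtrsim t2^{n}$ on the block of the transport term via a commutator/paraproduct reduction together with a nonvanishing coefficient at the origin, and finally the choice $t2^{n}\approx\epsilon_{0}$. The only point needing care is your nondegeneracy condition $\alpha\neq0$: a centered even bump $f$ gives $e\cdot\nabla(1-\Delta)^{-1}f(0)=0$ by symmetry, so $f$ must be translated or otherwise chosen asymmetrically (and the high-frequency contribution to $\nabla S_{0}(0)$ must be accounted for); the paper sidesteps this by taking $\sin$-modulated blocks and $u_{0}=(1-\Delta)S_{0}$, for which $\partial_{x_{1}}S_{0}(0)=\tfrac{17}{12}\phi^{d}(0)\sum_{n\geq3}2^{-n(s+1)}>0$ automatically, with no separate low-frequency part needed.
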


\begin{remark}
Theorem \ref{d1} demonstrates the ill-posedness of \eqref{ame} in ${B}_{p,\infty}^{s}(\mathbb{R}^{d})$. Precisely speaking, we can construct $u_{0}\in{B}_{p,\infty}^{s}(\mathbb{R}^{d})$ such that the corresponding solutions of the Keller-Segel equation do not converge to $u_{0}$ in the metric of ${B}_{p,\infty}^{s}(\mathbb{R}^{d})$ as $t\to0^+$.
\end{remark}
\begin{remark}
We should mention that the key
decomposition technique and the special initial data used in \cite{Zhang01} can not be applied to the present case $p\neq2$ any more. To overcome these difficulties, we construct a new initial data which is completely different from \cite{Zhang01}. In particular,  by utilizing the commutator estimate and some basic analysis, we make the proof more simple.
\end{remark}
The rest of the paper is organized as follows. In section 2, we introduce some basic definitions and key lemmas. In section 3, we present the proof of Theorem \ref{d1}.

 \section{Preliminaries}
{\bf Notation}\;
The notation $A\les B$ (resp., $A \gtrsim B$) means that there exists a harmless positive constant $c$ such that $A \leq cB$ (resp., $A \geq cB$).
Given a Banach space $X$, we denote its norm by $\|\cdot\|_{X}$. For a Banach space $X$ and for any $0<T\leq\infty$, we use standard notation $L^{p}(0,T;X)$ to denote the quasi-Banach space of Bochner measurable functions $f$ from $(0,T)$ to X endowed with the norm
 \begin{equation*}
 \|f\|_{{L^{p}_{T}}X}:=
 \begin{cases}
        \f(\int_0^T \|f(\cdot,t)\|^{p}_{X}\,dt\g)^{\frac{1}{p}},&\text{if $1\leq p<\infty$},\\
        \sup_{0\leq t \leq T}\|f(\cdot,t)\|_X,&\text{if $p=\infty$}.
 \end{cases}
 \end{equation*}
 Let us recall that for all $f\in \mathcal{S}'$, the Fourier transform $\widehat{f}$, is defined by
$$
(\mathcal{F} f)(\xi)=\widehat{f}(\xi)=\int_{\R^d}e^{-ix\cdot\xi}f(x)\dd x \quad\text{for any}\; \xi\in\R.
$$
 The inverse Fourier transform of any $g$ is given by
$$
(\mathcal{F}^{-1} g)(x)=\frac{1}{(2 \pi)^d} \int_{\R^d} e^{i x \cdot\xi}g(\xi)  \dd \xi.
$$
Next, we recall some facts on the Littlewood-Paley theory which can be found in \cite{B}.

Let $\varphi\in C_c^{\infty}(\mathbb{R}^d)$ and $\chi\in C_c^{\infty}(\mathbb{R}^d)$ be a radial positive function such that
    \begin{align*}
    &\mathrm{supp}\ \varphi\subset\{\xi\in \mathbb{R}^d:\frac34\leq|\xi|\leq\frac83\},  \:\: \:\: \mathrm{supp}\ \chi\subset\{\xi\in \mathbb{R}^d:|\xi|\leq\frac43\},\quad
    \\& \chi(\xi)+ \sum_{j\geq0}\varphi(2^{-j}\xi)=1\ \text{for any}\ \xi\in\mathbb{R}^d,
     \\&|i-j|\geq2\Rightarrow \mathrm{supp}\ \varphi(2^{-i}\cdot)\cap\mathrm{supp}\ \varphi(2^{-j}\cdot)=\varnothing,
     \\&j\geq1\Rightarrow \mathrm{supp}\ \varphi(2^{-j}\cdot)\cap\mathrm{supp}\ \chi(x)=\varnothing,\\
     &\varphi(\xi)\equiv 1\quad \text{for}\quad\frac43\leq |\xi|\leq \frac32.
      \end{align*}
We can  define the nonhomogeneous localization operators as follows.
\begin{align*}
         & \Delta_ju=0,\:\:j\leq-1;\:\:\:\:  \Delta_ju=\chi(D)u,\:\:j=-1;\:\:\:\:  \Delta_ju=\varphi(2^{-j}D)u,\:\:j\geq0,
    \end{align*}
where the pseudo-differential operator $f(D):u\to\mathcal{F}^{-1}(f \mathcal{F}u)$.

Let us now define the Besov spaces as follows.
\begin{definition}[\cite{B}]
Let $s\in\mathbb{R}$ and $(p,r)\in[1, \infty]^2$. The nonhomogeneous Besov space $B^{s}_{p,r}(\R^d)$ is defined by
\begin{align*}
B^{s}_{p,r}(\R):=\Big\{f\in \mathcal{S}'(\R^d):\;\|f\|_{B^{s}_{p,r}(\R^d)}<\infty\Big\},
\end{align*}
where
\begin{numcases}{\|f\|_{B^{s}_{p,r}(\R^d)}=}
\left(\sum_{j\geq-1}2^{sjr}\|\Delta_jf\|^r_{L^p(\R^d)}\right)^{\fr1r}, &if $1\leq r<\infty$,\nonumber\\
\sup_{j\geq-1}2^{sj}\|\Delta_jf\|_{L^p(\R^d)}, &if $r=\infty$.\nonumber
\end{numcases}
\end{definition}
\begin{remark}\label{re3}
It should be emphasized that $B^{s}_{p,\infty}(\R^d)$ with $s>\frac{d}{p}$ is a Banach algebra and $B^s_{p,\infty}(\R^d)\hookrightarrow B^t_{p,\infty}(\R^d)$ with $s>t$. These facts will be often used implicity.
\end{remark}

Finally, we recall some lemmas which be used later.

\begin{lemma}[Bernstein's inequality,\cite{B}]\label{pro3.7}
 Let $\mathcal{C}$ be an annulus and $\mathcal{B}$ be a ball. There exists a constant $C$ such that for any nonnegative integer $k$, any couple $(p,q)\in[1,\infty]^{2}$ with $1\leq p\leq q$, and any $L^{p}$ function $u$ we have
 \begin{align*}
&\sup_{|\alpha|=k}\|\partial^{\alpha}u\|_{L^{q}(\mathbb{R}^{d})}\leq C^{k+1}\lambda^{k+d(\frac{1}{p}-\frac{1}{q})}\|u\|_{L^{p}(\mathbb{R}^{d})}, \:\: \mathrm{supp} \widehat u\subset\lambda \mathcal{B},
\\&C^{-(k+1)}\lambda^{k}\|u\|_{L^{p}(\mathbb{R}^{d})}\leq \sup_{|\alpha|=k}\|\partial^{\alpha}u\|_{L^{p}(\mathbb{R}^{d})}\leq C^{(k+1)}\lambda^{k}\|u\|_{L^{p}(\mathbb{R}^{d})},\:\: \mathrm{supp}\widehat u\subset\lambda \mathcal{C}.
    \end{align*}
\end{lemma}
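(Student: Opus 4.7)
The plan is to build an initial datum $u_0\in B^s_{p,\infty}(\R^d)$ consisting of a fixed smooth bump plus a sum of dyadic high-frequency oscillations, apply Duhamel to extract the first-order-in-$t$ correction, and show that at each large scale $n$ the transport part of the nonlinearity produces a contribution of size $\simeq t\cdot 2^n$ to the $B^s_{p,\infty}$-norm of $u(t)-u_0$. Evaluating at $t=t_n\simeq 2^{-n}$ will then yield a lower bound of order $1$ uniformly in $n$, and therefore a positive limsup as $t\to 0^+$.

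\textbf{Initial datum and Duhamel setup.} Pick $\phi,\psi\in C_c^\infty(\R)$ with $\phi(0),\psi(0)\neq 0$ and with $\widehat\psi$ supported so narrowly that $\psi(x_1)\cos(2^n x_1)$ has Fourier support in the annulus where $\varphi(2^{-n}\cdot)\equiv 1$. Set $\phi_d(x)=\prod_{k=1}^d\phi(x_k)$, $\psi_d(x)=\prod_{k=1}^d\psi(x_k)$, and for $N_0$ large define
$$u_0(x):=\phi_d(x)+\sum_{n\geq N_0}2^{-ns}\psi_d(x)\cos(2^n x_1),$$
so that $u_0\in B^s_{p,\infty}$ with $\Delta_n u_0=2^{-ns}\psi_d\cos(2^n x_1)$ for $n\geq N_0$. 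Using $\Delta S=S-u$, rewrite \eqref{ame} as $\partial_t u=-\mathcal{N}(u)$ with
$$\mathcal{N}(u):=(1-2u)\nabla u\cdot\nabla S+u(1-u)(S-u),\qquad S=(1-\Delta)^{-1}u,$$
and split
$$u(t)=u_0-t\,\mathcal{N}(u_0)+\mathcal{R}(t),\qquad \mathcal{R}(t):=-\int_0^t\bigl[\mathcal{N}(u(\tau))-\mathcal{N}(u_0)\bigr]d\tau.$$
Local existence in $B^s_{p,\infty}$ from \cite{Zhou01} gives $\|u(\tau)\|_{B^s_{p,\infty}}\lesssim 1$ on a short interval, hence $\|\mathcal{N}(u(\tau))\|_{B^{s-1}_{p,\infty}}\lesssim 1$ and $\|u(\tau)-u_0\|_{B^{s-1}_{p,\infty}}\lesssim \tau$. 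A telescoped expansion of $\mathcal{N}(u)-\mathcal{N}(u_0)$ in $\delta u=u-u_0$ combined with tame Moser estimates in the Banach algebra $B^{s-1}_{p,\infty}$ (valid since $s-1>d/p$) then produces $\|\mathcal{R}(t)\|_{B^{s-1}_{p,\infty}}\lesssim t^2$.

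\textbf{Lower bound on $\Delta_n\mathcal{N}(u_0)$.} Let $v=\nabla S_0=(1-\Delta)^{-1}\nabla u_0$ and $v^{\mathrm{low}}=\Delta_{-1}v$; the latter is Schwartz with $(v^{\mathrm{low}})_1=\partial_{x_1}(1-\Delta)^{-1}\phi_d+O(2^{-N_0})$ in $L^\infty$. The dominant $n$-th-mode piece of $\nabla u_0$ is $H_n:=-2^{n(1-s)}\psi_d(x)\sin(2^n x_1)e_1$, spectrally localized inside the $\Delta_n$-annulus. All cross-mode products $H_m\cdot(1-\Delta)^{-1}H_k$ with $\{m,k\}\neq\{n\}$ reach $\Delta_n$ only through terms of size $\lesssim 2^{-N_0(s+1)}\cdot 2^{n(1-s)}$, while the zero-order piece $u_0(1-u_0)(S_0-u_0)$ contributes $\lesssim 2^{-ns}$ to $\|\Delta_n\cdot\|_{L^p}$. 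Using the commutator bound $\|[\Delta_n,a]g\|_{L^p}\lesssim 2^{-n}\|\nabla a\|_{L^\infty}\|g\|_{L^p}$ from \cite{B} to peel off the smooth envelope $a=(1-2u_0)(v^{\mathrm{low}})_1$, the dominant frequency-$2^n$ piece of $\mathcal{N}(u_0)$ equals
$$-2^{n(1-s)}(1-2u_0)(v^{\mathrm{low}})_1\psi_d(x)\sin(2^n x_1)+r_n,\qquad \|r_n\|_{L^p}\lesssim 2^{-N_0(s+1)}\cdot 2^{n(1-s)}+2^{-ns}.$$
For $N_0$ large, the envelope $(1-2\phi_d)(v^{\mathrm{low}})_1\psi_d$ is not identically zero and its $L^p$ norm dominates, so $2^{ns}\|\Delta_n\mathcal{N}(u_0)\|_{L^p}\geq c_0\cdot 2^n$ for all $n\geq N_1$, where $c_0>0$ depends only on the bumps.

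\textbf{Conclusion and main obstacle.} Set $t_n=\eta\cdot 2^{-n}$ with $\eta$ small, to be chosen. Using the Bernstein-type lift $2^{ns}\|\Delta_n f\|_{L^p}\leq 2^n\|f\|_{B^{s-1}_{p,\infty}}$ applied to $\mathcal{R}(t_n)$,
$$\|u(t_n)-u_0\|_{B^s_{p,\infty}}\geq 2^{ns}\|\Delta_n(u(t_n)-u_0)\|_{L^p}\geq c_0\eta-C\eta^{2}\cdot 2^{-n}\geq c_0\eta/2=:\epsilon_0$$
for all $n\geq N_2$, whence the limsup as $n\to\infty$ yields the theorem. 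The technical heart of the argument is the refined remainder bound $\|\mathcal{R}(t)\|_{B^{s-1}_{p,\infty}}\lesssim t^2$: the transport contribution $(1-2u)\nabla\delta u\cdot\nabla S$ inside $\mathcal{N}(u)-\mathcal{N}(u_0)$ features $\nabla\delta u$, which is only bounded — not small — in $B^{s-1}_{p,\infty}$, so closing the $t^2$-scaling requires an integration-by-parts or paradecomposition step that transfers a derivative off $\delta u$ before invoking $\|\delta u\|_{B^{s-1}_{p,\infty}}\lesssim \tau$. Handling this uniformly for $1\leq p\leq\infty$ and $d\geq 1$ is precisely where the commutator-based simplification promised in the authors' remark replaces the $p=2$, $d=1$ wavelet construction of \cite{Zhang01}.
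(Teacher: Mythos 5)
Your proposal does not address the statement at all: the statement to be proved is Bernstein's inequality (Lemma \ref{pro3.7}), a classical frequency-localization lemma asserting that for a function whose Fourier transform is supported in a ball $\lambda\mathcal{B}$ one has $\sup_{|\alpha|=k}\|\partial^{\alpha}u\|_{L^{q}}\leq C^{k+1}\lambda^{k+d(\frac1p-\frac1q)}\|u\|_{L^{p}}$, together with the two-sided equivalence $\|\partial^{\alpha}u\|_{L^{p}}\approx\lambda^{k}\|u\|_{L^{p}}$ when $\widehat u$ is supported in an annulus $\lambda\mathcal{C}$. What you have written instead is a sketch of the paper's \emph{main theorem} (Theorem \ref{d1}): construction of oscillatory initial data, a Duhamel expansion $u(t)=u_0-t\mathcal{N}(u_0)+\mathcal{R}(t)$ with $\|\mathcal{R}(t)\|_{B^{s-1}_{p,\infty}}\lesssim t^2$, and a lower bound on $\Delta_n\mathcal{N}(u_0)$. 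None of this has any bearing on Bernstein's inequality, which is a statement about arbitrary $L^p$ functions with compact Fourier support and involves no evolution equation, no initial data, and no Besov ill-posedness mechanism. (In the paper this lemma carries no proof: it is quoted from the reference [B], Bahouri--Chemin--Danchin.)

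For the record, a proof of the actual statement runs as follows. For the ball case, choose $\psi\in C^{\infty}_{c}(\R^{d})$ equal to $1$ on a neighborhood of $\mathcal{B}$; since $\widehat u$ is supported in $\lambda\mathcal{B}$, one has $u=\lambda^{d}\check{\psi}(\lambda\cdot)*u$, hence $\partial^{\alpha}u=\lambda^{d+k}(\partial^{\alpha}\check{\psi})(\lambda\cdot)*u$ for $|\alpha|=k$, and Young's inequality $\|f*g\|_{L^{q}}\leq\|f\|_{L^{r}}\|g\|_{L^{p}}$ with $1+\frac1q=\frac1r+\frac1p$ gives the stated bound after computing $\|\lambda^{d+k}(\partial^{\alpha}\check{\psi})(\lambda\cdot)\|_{L^{r}}=\lambda^{k+d(\frac1p-\frac1q)}\|\partial^{\alpha}\check{\psi}\|_{L^{r}}$. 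The upper bound in the annulus case is the same argument with $q=p$. For the reverse inequality one uses the algebraic identity $|\xi|^{2k}=\sum_{|\alpha|=k}\frac{k!}{\alpha!}\xi^{2\alpha}$, valid on the annulus where $|\xi|$ is bounded away from $0$, to write $\widehat u(\xi)=\sum_{|\alpha|=k}\frac{k!}{\alpha!}\frac{(i\xi)^{\alpha}\tilde\psi(\xi/\lambda)}{|\xi|^{2k}}\,\widehat{\partial^{\alpha}u}(\xi)$ with $\tilde\psi\in C^{\infty}_{c}$ equal to $1$ on $\mathcal{C}$ and supported away from the origin; each multiplier $\xi\mapsto\frac{(i\xi)^{\alpha}\tilde\psi(\xi/\lambda)}{|\xi|^{2k}}$ has an inverse Fourier transform whose $L^{1}$ norm is $\lesssim C^{k}\lambda^{-k}$ by scaling, so Young's inequality again yields $\|u\|_{L^{p}}\leq C^{k+1}\lambda^{-k}\sup_{|\alpha|=k}\|\partial^{\alpha}u\|_{L^{p}}$, which is the claimed lower bound.
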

\begin{lemma}[\cite{B}]\label{pro3.5}
A smooth function $f:\mathbb{R}^{d}\rightarrow \mathbb{R}$ is said to be an $S^{m}$-multiplier: if\:\:$\forall\alpha\in \mathbb{N}^{d}$, there exists a constant $C_{\alpha}>0$ such that
 \begin{align*}
|\partial^{\alpha}f(\xi)|\leq C_{\alpha}(1+|\xi|)^{m-\alpha},\:\:\xi\in\mathbb{R}^{d}.
    \end{align*}
If $f$ is a $S^{m}$-multiplier, then the operator $f(D)$ is continuous from ${B}_{p,r}^{s}$ to ${B}_{p,r}^{s-m}$ for all $s\in \mathbb{R}$ and $1\leq p,r\leq\infty$.
\end{lemma}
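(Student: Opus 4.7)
The goal is to bound $\|f(D)u\|_{B^{s-m}_{p,r}}$ by $\|u\|_{B^s_{p,r}}$. By the definition of the Besov norm, this reduces to proving a pointwise-in-$j$ estimate of the form
\[
\|\Delta_j f(D) u\|_{L^p(\R^d)} \lesssim 2^{jm}\,\|\tilde\Delta_j u\|_{L^p(\R^d)} \qquad (j\geq -1),
\]
where $\tilde\Delta_j u = \tilde\varphi(2^{-j}D)u$ is a slightly enlarged localization with $\tilde\varphi\in C_c^\infty$ satisfying $\tilde\varphi\equiv 1$ on $\mathrm{supp}\,\varphi$ (and an analogous $\tilde\chi$ for $j=-1$). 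Once this is established, summing in $\ell^r_j$ with weight $2^{j(s-m)}$ immediately gives the desired continuity, since $2^{j(s-m)}\cdot 2^{jm}=2^{js}$ and $\|\tilde\Delta_j u\|_{L^p}$ is controlled by a finite shift of $\|\Delta_j u\|_{L^p}$.

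The point of the enlarged localization is the identity $\Delta_j f(D)u=\Delta_j f(D)\tilde\Delta_j u$, which is clear on the Fourier side because $\varphi(2^{-j}\xi)=\varphi(2^{-j}\xi)\tilde\varphi(2^{-j}\xi)$. Thus I am reduced to showing that the operator $\sigma_j(D)$ with symbol $\sigma_j(\xi)=\varphi(2^{-j}\xi)f(\xi)$ maps $L^p$ to $L^p$ with norm at most $C\,2^{jm}$, uniformly in $j\geq 0$. Writing $\sigma_j(D)g=K_j\ast g$ with $K_j=\mathcal{F}^{-1}\sigma_j$, Young's convolution inequality gives $\|\sigma_j(D)g\|_{L^p}\leq \|K_j\|_{L^1}\|g\|_{L^p}$, so everything boils down to proving $\|K_j\|_{L^1}\lesssim 2^{jm}$.

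To control $\|K_j\|_{L^1}$, I rescale. Substituting $\xi=2^j\eta$ yields $K_j(x)=2^{jd}\tilde K_j(2^j x)$ with
\[
\tilde K_j(y)=\frac{1}{(2\pi)^d}\int_{\R^d} e^{iy\cdot\eta}\,\varphi(\eta)\,f(2^j\eta)\,\dd\eta,
\]
so $\|K_j\|_{L^1}=\|\tilde K_j\|_{L^1}$. Pick an integer $N>d$ and estimate $\|\tilde K_j\|_{L^1}\lesssim \|(1+|y|)^N\tilde K_j\|_{L^\infty}$, which via integration by parts is controlled by the sup norms of $\partial^\alpha[\varphi(\eta)f(2^j\eta)]$ for $|\alpha|\leq N$. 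Applying Leibniz's formula,
\[
\partial^\alpha\!\bigl[\varphi(\eta)f(2^j\eta)\bigr]=\sum_{\beta\leq\alpha}\binom{\alpha}{\beta}\,\partial^\beta\varphi(\eta)\,2^{j|\alpha-\beta|}(\partial^{\alpha-\beta}f)(2^j\eta),
\]
and using the $S^m$-bound $|\partial^{\alpha-\beta}f(2^j\eta)|\leq C_{\alpha-\beta}(1+2^j|\eta|)^{m-|\alpha-\beta|}\lesssim 2^{j(m-|\alpha-\beta|)}$ on $\mathrm{supp}\,\varphi$ (where $|\eta|\sim 1$), each summand is $\lesssim 2^{jm}$. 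Hence $\|\tilde K_j\|_{L^1}\lesssim 2^{jm}$ uniformly in $j\geq 0$, which is the required bound.

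The case $j=-1$ is handled identically with $\varphi$ replaced by $\chi$; here no rescaling is needed and the symbol $\chi(\xi)f(\xi)$ is compactly supported and smooth, so its inverse Fourier transform is Schwartz, giving an $O(1)$ bound, which is compatible with the $2^{-m}\sim 1$ factor. The main obstacle is purely technical, namely carefully tracking that Leibniz expansion gives precisely the factor $2^{jm}$ with no loss: any pair of derivatives $\partial^\beta\varphi$ (which costs nothing in $j$) and $\partial^{\alpha-\beta}f$ (which loses $2^{j|\alpha-\beta|}$ from the chain rule but gains $2^{-j|\alpha-\beta|}$ from the decay in the $S^m$-class) must perfectly cancel, and this cancellation is exactly the content of the $S^m$-multiplier definition. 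With this in hand, the proof is complete.
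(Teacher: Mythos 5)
The paper does not prove this lemma at all: it is quoted as a known result from the reference [B] (Bahouri--Chemin--Danchin), so there is no in-paper proof to compare against. Your argument is correct and is essentially the standard proof found in that reference (Proposition 2.78 there): the identity $\Delta_j f(D)u=\Delta_j f(D)\tilde\Delta_j u$, the reduction via Young's inequality to the kernel bound $\|K_j\|_{L^1}\lesssim 2^{jm}$, and the rescaled integration-by-parts estimate in which the chain-rule factor $2^{j|\alpha-\beta|}$ cancels exactly against the $S^m$-decay $(1+2^j|\eta|)^{m-|\alpha-\beta|}\lesssim 2^{j(m-|\alpha-\beta|)}$ on the annulus $|\eta|\sim 1$. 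All the steps check out, including the separate treatment of the low-frequency block $j=-1$ and the final $\ell^r$ summation using that $\tilde\Delta_j u$ is recovered from finitely many neighboring blocks $\Delta_{j'}u$; note also that your reading of the paper's statement silently corrects its typo ($m-\alpha$ should be $m-|\alpha|$).
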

\begin{lemma}[\cite{B}]\label{pro3.6}
For $1\leq p\leq\infty$ and $s>0$, there exists a constant $C$,depending continuously on $p$ and $s$, we have
 \begin{align*}
&\big\|2^{js}\|[\Delta_{j},v]\cdot\nabla f\|_{L^{p}}\big\|_{\ell^{\infty}}\leq C(\|\nabla v\|_{L^{\infty}}\|f\|_{{B}_{p,\infty}^{s}}+\|\nabla f\|_{L^{\infty}}\|\nabla v\|_{{ B}_{p,\infty}^{s-1}}),
\end{align*}
where $[\Delta_{j},v]\cdot\nabla f=\Delta_{j}(v\cdot\nabla f)-v\cdot\Delta_{j}\nabla f$.
\end{lemma}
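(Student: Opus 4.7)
The approach is Bony's paraproduct decomposition. Decompose the product as
\begin{align*}
v \cdot \nabla f = T_v \nabla f + T_{\nabla f} v + R(v, \nabla f),
\end{align*}
where $T_g h = \sum_{k} S_{k-1} g \, \Delta_k h$ is the paraproduct (with $S_{k-1} = \sum_{k'\leq k-2}\Delta_{k'}$ the low-frequency cut-off) and $R(g,h) = \sum_{|k-k'|\leq 1} \Delta_k g \, \Delta_{k'} h$ is the remainder. Applying this decomposition to $\Delta_j(v\cdot \nabla f)$ and to $v\cdot \Delta_j \nabla f$ and subtracting, the commutator splits into three pieces:
\begin{align*}
[\Delta_j, v]\cdot \nabla f = [\Delta_j, T_v]\nabla f + \bigl(\Delta_j T_{\nabla f} v - T_{\Delta_j \nabla f} v\bigr) + \bigl(\Delta_j R(v, \nabla f) - R(v, \Delta_j \nabla f)\bigr).
\end{align*}
Each piece will be estimated in $L^p$, multiplied by $2^{js}$, and the supremum in $j\geq -1$ taken.

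For the genuine commutator $[\Delta_j, T_v]\nabla f$, frequency localization restricts the sum to $|k-j|\leq 4$, and the key step is the kernel representation
\begin{align*}
[\Delta_j, S_{k-1}v]\Delta_k \nabla f(x) = \int h_j(x-y)\bigl(S_{k-1}v(y) - S_{k-1}v(x)\bigr)\Delta_k \nabla f(y)\, dy,
\end{align*}
where $h_j = 2^{jd}h(2^j\cdot)$ is the kernel of $\Delta_j$. A first-order Taylor expansion of $S_{k-1}v$ together with the moment bound $\int |y|\,|h_j(y)|\,dy\lesssim 2^{-j}$ and $\|\nabla S_{k-1}v\|_{L^\infty}\lesssim \|\nabla v\|_{L^\infty}$ yields $\|[\Delta_j, S_{k-1}v]\Delta_k\nabla f\|_{L^p}\lesssim 2^{-j}\|\nabla v\|_{L^\infty}\|\Delta_k\nabla f\|_{L^p}$. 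Bernstein's inequality (Lemma \ref{pro3.7}) then gives $\|\Delta_k\nabla f\|_{L^p}\lesssim 2^{k}\|\Delta_k f\|_{L^p}\lesssim 2^{k(1-s)}\|f\|_{B^s_{p,\infty}}$, so summing over $|k-j|\leq 4$ and multiplying by $2^{js}$ produces the bound $\|\nabla v\|_{L^\infty}\|f\|_{B^s_{p,\infty}}$.

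For the paraproduct piece $\Delta_j T_{\nabla f} v - T_{\Delta_j \nabla f} v$, frequency localization again restricts the summation index to $|k-j|\leq 4$, and H\"older's inequality with $\|S_{k-1}\nabla f\|_{L^\infty}\lesssim \|\nabla f\|_{L^\infty}$ reduces matters to $\sum_{|k-j|\leq 4}\|\Delta_k v\|_{L^p}$; using $\|\Delta_k v\|_{L^p}\sim 2^{-k}\|\Delta_k \nabla v\|_{L^p}\lesssim 2^{-ks}\|\nabla v\|_{B^{s-1}_{p,\infty}}$ gives the desired bound after multiplying by $2^{js}$. For the remainder piece, the support properties of $R$ force $k\geq j-C$, so that
\begin{align*}
2^{js}\|\Delta_j R(v, \nabla f)\|_{L^p} \lesssim \|\nabla f\|_{L^\infty}\sum_{k\geq j-C} 2^{(j-k)s}\|\nabla v\|_{B^{s-1}_{p,\infty}},
\end{align*}
which converges precisely because $s>0$ is assumed; the companion term $R(v,\Delta_j \nabla f)$ is handled by the same scheme (in fact simpler, since $\Delta_{k'}\Delta_j\neq 0$ only for $|k'-j|\leq 1$, collapsing the sum to finitely many terms). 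Collecting the three contributions yields the stated inequality. The main obstacle is the kernel/Taylor argument for $[\Delta_j, T_v]\nabla f$: trading a factor $2^{-j}$ for one derivative on $v$ exactly compensates the factor $2^j$ coming from the derivative on $f$, and is what produces the symmetric form $\|\nabla v\|_{L^\infty}\|f\|_{B^s_{p,\infty}}$ rather than the cruder $\|v\|_{L^\infty}\|\nabla f\|_{B^s_{p,\infty}}$.
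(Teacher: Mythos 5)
The paper offers no proof of this lemma at all: it is quoted directly from the reference \cite{B} of Bahouri--Chemin--Danchin (it is essentially their Lemma 2.100, specialized to $r=\infty$), and the proof given there is exactly the one you outline — Bony decomposition of the commutator into a paraproduct commutator, a paraproduct with the roles of $v$ and $\nabla f$ exchanged, and remainder terms, with the kernel/first-order Taylor argument supplying the crucial factor $2^{-j}\|\nabla v\|_{L^\infty}$ that is traded against the derivative on $f$. So your route is the canonical one and its central mechanism is correct.

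Two details of your sketch are inaccurate, though both are repairable with tools you already deploy. First, in the piece $T_{\Delta_j\nabla f}v=\sum_k S_{k-1}(\Delta_j\nabla f)\,\Delta_k v$, frequency localization does \emph{not} restrict the sum to $|k-j|\le 4$: the condition $S_{k-1}\Delta_j\neq 0$ only forces $k\ge j-C$, and indeed $S_{k-1}\Delta_j=\Delta_j$ for every $k\ge j+3$, so infinitely many $k$ survive. The estimate nevertheless goes through by the very mechanism you invoke for the remainder, since $2^{js}\sum_{k\ge j-C}2^{-ks}=\sum_{m\ge -C}2^{-ms}\lesssim 1$ precisely because $s>0$; this piece is a geometric series, not a finite sum. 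Second, your treatment of the remainder implicitly uses $\|\Delta_k v\|_{L^p}\lesssim 2^{-ks}\|\nabla v\|_{B^{s-1}_{p,\infty}}$ for every $k\ge -1$, but this fails at $k=-1$: the reverse Bernstein inequality requires spectrum in an annulus, while $\Delta_{-1}$ is supported in a ball, so $\|\Delta_{-1}v\|_{L^p}$ is not controlled by the right-hand side of the lemma (in the paraproduct terms this block is harmless because $S_{-2}=0$, but in the remainder it survives). For this single low-frequency block you must keep the two remainder contributions paired rather than estimate them separately: since $\Delta_j$ commutes with $\Delta_{-1}+\Delta_0$, their difference is exactly the commutator $[\Delta_j,\Delta_{-1}v]\,(\Delta_{-1}+\Delta_0)\nabla f$, which vanishes for all but finitely many $j$ and is bounded by $2^{-j}\|\Delta_{-1}\nabla v\|_{L^p}\,\|\nabla f\|_{L^\infty}$ by the same kernel/Taylor argument (with the $L^p$ norm now placed on the translation difference of $\Delta_{-1}v$). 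With these two repairs your proof is complete and coincides with the argument in \cite{B}.
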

\section{Proof of Theorem \ref{d1}}
For convenience of computation, we rewrite \eqref{ame} as follows
\begin{equation}\label{dme}
       \begin{cases}
        \partial_{t}u+(1-2u)\nabla S\cdot\nabla u+u(1-u)\Delta S=0, &\text{in}\quad \mathbb{R}^{+}\times\mathbb{R}^{d}, \\
        S=(1-\Delta)^{-1}u,          &\text{in}\quad    \mathbb{R}^{+}\times\mathbb{R}^{d}, \\
        u(x,0)=u_{0}(x)               , &\text{in}\quad \mathbb{R}^{d}.
       \end{cases}
     \end{equation}
Let $\widehat{\phi}\in C_{0}^{\infty}(\mathbb{R})$ be an even, real-valued and nonnegative function which satisfies
\begin{numcases}
 {\widehat{\phi}(\xi)=}
        1,& if $|\xi|\leq\frac{1}{4^{d}}$,\nonumber\\
        0,& if $|\xi|\geq\frac{1}{2^{d}}$.\nonumber
\end{numcases}
\begin{remark}\label{re5} By the  Fourier-Plancherel formula, we have $\phi(x)=\mathcal{F}^{-1}(\widehat{\phi}(\xi))$.
It is easy to check that
\begin{align*}
&\phi(0)=\fr{1}{2\pi}\int_{\R}\widehat{\phi}(\xi)\dd \xi>0\quad\text{and}\quad
\phi'(0)=\fr{1}{2\pi}\int_{\R}\widehat{\phi'}(\xi)\dd \xi=0.
    \end{align*}
\end{remark}
\begin{lemma}\label{pro3.1}
Define  the function $f_{n}(x)$ by
\begin{align*}
f_{n}(x)=\phi(x_{1})\sin\f(\frac{17}{12}2^{n}x_{1}\g)\phi(x_{2})\cdot\cdot\cdot\phi(x_{d}),\quad n\geq3.
    \end{align*}
Then
\begin{numcases}
{\Delta_{j}(f_{n})=}
f_{j}, &if $j=n$,\nonumber\\
0, &if $j\neq n$.\nonumber
\end{numcases}
\end{lemma}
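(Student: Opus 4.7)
The proof will be a pure Fourier-support computation: I will compute $\widehat{f_n}$ explicitly and show its frequency support sits inside the plateau $\{\tfrac{4}{3}2^n\leq|\xi|\leq\tfrac{3}{2}2^n\}$ on which $\varphi(2^{-n}\cdot)\equiv 1$, while being disjoint from the supports of $\varphi(2^{-j}\cdot)$ for $j\neq n$ and from the support of $\chi$.

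First I would write, using that $\sin(ax_1)=\tfrac{1}{2i}(e^{iax_1}-e^{-iax_1})$ translates Fourier support by $\pm a$,
\begin{align*}
\widehat{f_n}(\xi)=\frac{1}{2i}\Bigl[\widehat{\phi}\bigl(\xi_1-\tfrac{17}{12}2^n\bigr)-\widehat{\phi}\bigl(\xi_1+\tfrac{17}{12}2^n\bigr)\Bigr]\prod_{k=2}^{d}\widehat{\phi}(\xi_k).
\end{align*}
Since $\operatorname{supp}\widehat{\phi}\subset\{|\eta|\leq 2^{-d}\}$, this places $\operatorname{supp}\widehat{f_n}$ inside
\begin{align*}
\mathcal{A}_n:=\Bigl\{\xi\in\R^d:\bigl|\,|\xi_1|-\tfrac{17}{12}2^n\bigr|\leq 2^{-d}\ \text{ and }\ |\xi_k|\leq 2^{-d}\ \text{ for }k=2,\dots,d\Bigr\}.
\end{align*}

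Next I verify the key inclusion $\mathcal{A}_n\subset\{\tfrac{4}{3}2^n\leq|\xi|\leq\tfrac{3}{2}2^n\}$. For the lower bound, $|\xi|\geq|\xi_1|\geq\tfrac{17}{12}2^n-2^{-d}$, and because $n\geq 3$ and $d\geq 1$ imply $2^{n+d}\geq 16>12$, we get $2^{-d}\leq\tfrac{1}{12}2^n$, hence $|\xi|\geq\tfrac{4}{3}2^n$. For the upper bound I use
\begin{align*}
|\xi|^2\leq\bigl(\tfrac{17}{12}2^n+2^{-d}\bigr)^2+(d-1)2^{-2d}=\tfrac{289}{144}2^{2n}+\tfrac{17}{6}2^{n-d}+d\,2^{-2d},
\end{align*}
and observe that the gap $(\tfrac{3}{2}2^n)^2-\tfrac{289}{144}2^{2n}=\tfrac{35}{144}2^{2n}$ is easily large enough to absorb the lower order terms once $n\geq 3$, yielding $|\xi|\leq\tfrac{3}{2}2^n$. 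The numerical room here comes from the deliberate choice $\tfrac{17}{12}=\tfrac12(\tfrac{4}{3}+\tfrac{3}{2})$, which centers the bump in the plateau of $\varphi$.

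Given the inclusion, the identity $\varphi\equiv 1$ on $\{\tfrac43\leq|\xi|\leq\tfrac32\}$ gives $\varphi(2^{-n}\xi)\equiv 1$ on $\operatorname{supp}\widehat{f_n}$, so $\Delta_n f_n=\mathcal{F}^{-1}(\varphi(2^{-n}\cdot)\widehat{f_n})=f_n$. For $j\neq n$ with $j\geq 0$, disjointness of $\operatorname{supp}\widehat{f_n}$ and $\operatorname{supp}\varphi(2^{-j}\cdot)$ follows: when $|j-n|\geq 2$ it is immediate from the defining property $|i-j|\geq 2\Rightarrow\operatorname{supp}\varphi(2^{-i}\cdot)\cap\operatorname{supp}\varphi(2^{-j}\cdot)=\varnothing$ applied with $i=n$; when $j=n\pm 1$ the stronger inclusion $\mathcal{A}_n\subset\{\tfrac{4}{3}2^n\leq|\xi|\leq\tfrac{3}{2}2^n\}$ already rules out intersection with $\{\tfrac{3}{4}2^j\leq|\xi|\leq\tfrac{8}{3}2^j\}$. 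For $j=-1$, since $|\xi_1|\geq\tfrac{17}{12}2^n-2^{-d}\gg\tfrac43$ for $n\geq 3$, the support of $\chi$ is disjoint as well. Hence $\Delta_j f_n=0$ for all $j\neq n$.

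The only real care needed is the numerical verification of the two-sided bound on $|\xi|$; no deep machinery beyond the Littlewood--Paley definitions is involved, and the bound $n\geq 3$ is exactly what is needed to make the inequalities close.
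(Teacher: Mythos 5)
Your proof is correct and takes essentially the same route as the paper's, which simply records that $\mathrm{supp}\,\widehat{f_n}$ lies in the annulus $\big\{\xi:\ \big|\,|\xi|-\tfrac{17}{12}2^{n}\big|\le\tfrac12\big\}$ and defers the remaining support bookkeeping to a reference. You have supplied exactly the omitted details (the explicit translation formula for $\widehat{f_n}$, the two-sided bound placing the support in the plateau where $\varphi(2^{-n}\cdot)\equiv1$, and the disjointness for $j\neq n$, including $j=n\pm1$ and $j=-1$), and your numerical estimates, which are in fact strict, close the argument.
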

\begin{proof} Notice that
\begin{align*}
\mathrm{supp} \ \widehat{f}_{n}&\subset \Big\{\xi\in\R^d: \ \frac{17}{12}2^{n}-\fr12\leq |\xi|\leq \frac{17}{12}2^{n}+\fr12\Big\},
\end{align*}
using the definition of $\Delta_{j}$ enables us to get the desired result. For more details see \cite{Li4}.
\end{proof}
\begin{proposition}\label{pro3.2}
Define  the initial data $u_0(x)$ as
\begin{align*}
&S_{0}(x):=\sum_{n=3}^{\infty}2^{-n(s+2)}f_{n}(x),\\
&u_{0}(x):=(1-\Delta)S_{0}(x).
    \end{align*}
If $s>1+\frac{d}{p}$, we have
\begin{align*}
 \|u_{0}\|_{{B}_{p,\infty}^{s}}\leq C.
    \end{align*}
\end{proposition}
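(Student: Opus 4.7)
The plan is to reduce everything to an $L^p$ estimate on a single $f_n$ via Lemma~\ref{pro3.1}, use Schwartz decay of $\phi$ to kill the dependence on $n$, and then promote $B^{s+2}_{p,\infty}$ control of $S_0$ to $B^s_{p,\infty}$ control of $u_0 = (1-\Delta)S_0$ using the $S^2$-multiplier lemma.

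First I would observe that since $\widehat{\phi} \in C_c^\infty(\R)$, the function $\phi$ lies in the Schwartz class $\mathcal{S}(\R)$, so $\|\phi\|_{L^p(\R)} \leq C$ for every $1 \leq p \leq \infty$. Because
\[
|f_n(x)| \leq |\phi(x_1)|\,|\phi(x_2)|\cdots|\phi(x_d)|,
\]
Tonelli gives $\|f_n\|_{L^p(\R^d)} \leq \|\phi\|_{L^p(\R)}^d \leq C$ uniformly in $n$.

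Next I would compute the Littlewood-Paley blocks of $S_0$. Since the series defining $S_0$ converges in $\mathcal{S}'$ and $\Delta_j$ is continuous on $\mathcal{S}'$, Lemma~\ref{pro3.1} yields
\[
\Delta_j S_0 = \begin{cases} 2^{-j(s+2)} f_j, & j \geq 3, \\ 0, & j \leq 2. \end{cases}
\]
Consequently
\[
\|S_0\|_{B^{s+2}_{p,\infty}} = \sup_{j \geq 3} 2^{j(s+2)} \|\Delta_j S_0\|_{L^p} = \sup_{j \geq 3} \|f_j\|_{L^p} \leq C.
\]

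Finally, the symbol $1+|\xi|^2$ is an $S^2$-multiplier in the sense of Lemma~\ref{pro3.5}, so $(1-\Delta) : B^{s+2}_{p,\infty}(\R^d) \to B^s_{p,\infty}(\R^d)$ is bounded. Therefore
\[
\|u_0\|_{B^s_{p,\infty}} = \|(1-\Delta)S_0\|_{B^s_{p,\infty}} \leq C\,\|S_0\|_{B^{s+2}_{p,\infty}} \leq C,
\]
which is the desired bound. (Alternatively, one could avoid Lemma~\ref{pro3.5} entirely: since $(1-\Delta)$ preserves Fourier support, $\Delta_j u_0 = 2^{-n(s+2)}(1-\Delta)f_n$ for $j=n\geq 3$, and Bernstein's inequality (Lemma~\ref{pro3.7}) gives $\|(1-\Delta)f_n\|_{L^p} \leq C\,2^{2n}$, whence $2^{js}\|\Delta_j u_0\|_{L^p} \leq C$.) There is no real obstacle here; the only point to be careful about is checking that $\phi \in L^p$ for all $p \in [1,\infty]$, which is automatic from $\phi \in \mathcal{S}(\R)$, and verifying that the convergence of the series $\sum_n 2^{-n(s+2)}f_n$ is compatible with applying the localization operators termwise — this follows from absolute convergence in $\mathcal{S}'$ once $s+2 > 0$.
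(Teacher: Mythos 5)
Your proposal is correct and follows essentially the same route as the paper: identify $\Delta_j S_0 = 2^{-j(s+2)}f_j$ via Lemma \ref{pro3.1}, bound $\|f_j\|_{L^p}$ uniformly, and pass from $\|S_0\|_{B^{s+2}_{p,\infty}}$ to $\|u_0\|_{B^{s}_{p,\infty}}$ by the boundedness of $(1-\Delta)$ as an $S^2$-multiplier (Lemma \ref{pro3.5}). You simply spell out a few details the paper leaves implicit, such as the uniform $L^p$ bound $\|f_n\|_{L^p}\leq \|\phi\|_{L^p}^d$ and the termwise action of $\Delta_j$ on the series.
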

\begin{proof}
By Lemma \ref{pro3.1}, we have
 \begin{align*}
 \Delta_{j}S_{0}=2^{-j(s+2)}f_{j}(x).
    \end{align*}
Combining the above and Minkowski's inequality yields
\begin{align*}
 \|u_{0}\|_{{B}_{p,\infty}^{s}}&\leq C\|S_{0}\|_{{B}_{p,\infty}^{s+2}}=\sup_{j\geq 0}2^{(s+2) j}\|\Delta_{j}S_0\|_{L^p}\leq C.
    \end{align*}
    We complete the proof of Proposition \ref{pro3.2}.
\end{proof}

Using Proposition \ref{pro3.2} and Theorem 1.1 in \cite{Zhou01}, we can obtain that there exists a short time $T>0$ that {(\ref{dme})} has a  unique solution
$u\in  L^{\infty}([0,T);{B}_{p,\infty}^{s})\cap Lip([0,T);{B}_{p,\infty}^{s-1})$ for $s>1+\frac{d}{p}$. Moreover, it  holds
\begin{align}\label{FYF}
 \|u(t)\|_{L^{\infty}_{T}({B}_{p,\infty}^{s})}\leq C\|u_{0}\|_{{B}_{p,\infty}^{s}}.
    \end{align}

\begin{proposition}\label{pro3.3}
Let $s-1>\frac{d}{p}$ and $\|u_{0}\|_{{B}_{p,\infty}^{s}}\les1$. Assume that $u\in L^{\infty}(0,T;{B}_{p,\infty}^{s}(\mathbb{R}^{d}))$ be the  solution of \eqref{ame}, then we have
 \begin{align*}
 \|u(t)-u_{0}\|_{{B}_{p,\infty}^{s-1}} \les t.
    \end{align*}
\end{proposition}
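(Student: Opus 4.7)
The idea is to write $u(t)-u_0$ as the time integral of $\partial_\tau u$ using the equation, and then bound the resulting nonlinear terms in the space $B^{s-1}_{p,\infty}$, which is one derivative less regular than $B^s_{p,\infty}$. Since $s>1+\frac{d}{p}$ is equivalent to $s-1>\frac{d}{p}$, the space $B^{s-1}_{p,\infty}(\mathbb{R}^d)$ is a Banach algebra (Remark on algebra property), and this will make all the multilinear estimates straightforward.

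\textbf{Step 1: Integral representation.} From \eqref{dme} I would write
\begin{align*}
u(t,x)-u_0(x)=-\int_0^t\Bigl[(1-2u)\,\nabla S\cdot\nabla u+u(1-u)\,\Delta S\Bigr](\tau,x)\,\mathrm{d}\tau,
\end{align*}
so by Minkowski's inequality it suffices to bound the integrand in $B^{s-1}_{p,\infty}$ uniformly in $\tau\in[0,T)$.

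\textbf{Step 2: Regularity transfer from $u$ to $S$.} Since the symbol $(1+|\xi|^2)^{-1}$ is an $S^{-2}$-multiplier, Lemma \ref{pro3.5} gives the continuous mapping $(1-\Delta)^{-1}:B^{s-1}_{p,\infty}\to B^{s+1}_{p,\infty}$. Using $S=(1-\Delta)^{-1}u$ together with the embedding $B^{s}_{p,\infty}\hookrightarrow B^{s-1}_{p,\infty}$, I obtain
\begin{align*}
\|\nabla S\|_{B^{s-1}_{p,\infty}}+\|\Delta S\|_{B^{s-1}_{p,\infty}}\les \|S\|_{B^{s+1}_{p,\infty}}\les\|u\|_{B^{s-1}_{p,\infty}}\les\|u\|_{B^{s}_{p,\infty}}.
\end{align*}

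\textbf{Step 3: Nonlinear estimates via the algebra property.} The constant $1$ appearing in $(1-2u)$ and $(1-u)$ is not in $B^{s-1}_{p,\infty}$ for $p<\infty$, so I would first expand
\begin{align*}
(1-2u)\nabla S\cdot\nabla u+u(1-u)\Delta S=\nabla S\cdot\nabla u-2u\,\nabla S\cdot\nabla u+u\,\Delta S-u^2\,\Delta S.
\end{align*}
Applying the Banach-algebra bound $\|fg\|_{B^{s-1}_{p,\infty}}\les\|f\|_{B^{s-1}_{p,\infty}}\|g\|_{B^{s-1}_{p,\infty}}$ term by term and combining with Step 2, each term is controlled by a polynomial expression in $\|u\|_{B^s_{p,\infty}}$ (e.g.\ $\|\nabla S\cdot\nabla u\|_{B^{s-1}_{p,\infty}}\les\|u\|_{B^s_{p,\infty}}^2$, and the cubic terms by $\|u\|_{B^s_{p,\infty}}^3$). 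Invoking \eqref{FYF} and the hypothesis $\|u_0\|_{B^s_{p,\infty}}\les 1$, the integrand is bounded by a constant on $[0,T)$, and integration in $\tau$ yields the claim.

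\textbf{Main obstacle.} The calculation is quite routine; the only subtlety is avoiding the non-integrable constant $1$ inside $(1-2u)$ and $(1-u)$, which is resolved by the expansion in Step 3. Working in $B^{s-1}_{p,\infty}$ rather than $B^s_{p,\infty}$ is exactly what allows one derivative to be absorbed by $\nabla S$, $\nabla u$, or $\Delta S$ while keeping the other factors in the same algebra, which is the reason linear-in-$t$ convergence only holds at this weaker regularity level.
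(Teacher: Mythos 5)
Your proposal is correct and follows essentially the same route as the paper: write $u(t)-u_0$ as the time integral of the nonlinearity, transfer regularity from $u$ to $S$ via the $S^{-2}$-multiplier $(1-\Delta)^{-1}$, and close with the Banach-algebra property of $B^{s-1}_{p,\infty}$ for $s-1>\frac{d}{p}$ together with the a priori bound \eqref{FYF}. Your explicit expansion to avoid the constant $1$ is a minor clarification of a step the paper performs implicitly, not a different argument.
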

\begin{proof}
Using  the Newton-Leibniz formula, Minkowski's inequality, Remark \ref{re3}, Lemma \ref{pro3.5} and Proposition \ref{pro3.2}, we have
\begin{align*}
 \|u(t)-u_{0}\|_{{B}_{p,\infty}^{s-1}}&\leq \int_0^t \|(1-2u)\nabla S\cdot\nabla u\|_{{B}_{p,\infty}^{s-1}}\dd\tau+\int_0^t \|u(1-u)\Delta S\|_{{ B}_{p,\infty}^{s-1}}\dd\tau
 \\&\les t\|u\|_{L_{t}^{\infty}{B}_{p,\infty}^{s-2}}\|u\|_{L_{t}^{\infty}{B}_{p,\infty}^{s}}+t\|u\|_{L_{t}^{\infty}{ B}_{p,\infty}^{s-2}}\|u\|_{L_{t}^{\infty}{B}_{p,\infty}^{s-1}}\|u\|_{L_{t}^{\infty}{B}_{p,\infty}^{s}}\\&\quad+t\|u\|_{L_{t}^{\infty}{ B}_{p,\infty}^{s-1}}^{3}+t\|u\|_{L_{t}^{\infty}{B}_{p,\infty}^{s-1}}^{2}
 \\&\les t\f(\|u\|_{L_{t}^{\infty}{B}_{p,\infty}^{s}}^{3}+\|u\|_{L_{t}^{\infty}{B}_{p,\infty}^{s}}^{2}\g)
 \\&\les t\f(\|u_{0}\|_{{B}_{p,\infty}^{s}}^{3}+\|u_0\|_{{B}_{p,\infty}^{s}}^{2}\g)\\
 &\les t,
    \end{align*}
 where we have used \eqref{FYF}.

We complete the proof of Proposition \ref{pro3.3}.
\end{proof}

\begin{proposition}\label{pro3.4}
Let $s-1>\frac{d}{p}$ and $\|u_{0}\|_{{B}_{p,\infty}^{s}}\les1$. Assume that $u\in L^{\infty}(0,T;{B}_{p,\infty}^{s}(\mathbb{R}^{d}))$ be the  solution of \eqref{ame}, then we have
 \begin{align*}
 \|h(t,u_{0})\|_{{B}_{p,\infty}^{s-2}}\les t^{2},
    \end{align*}
    where we denote $$h(t,u_{0}):=u-u_{0}+tv_{0}$$ and $$v_{0}:=\nabla \cdot(u_0(1-u_0)\nabla S_0)=(1-2u_{0})\nabla S_{0}\cdot\nabla u_{0}+u_{0}(1-u_{0})\Delta S_{0}.$$
\end{proposition}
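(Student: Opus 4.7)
My plan is to exploit the evolution equation itself to recognize $h(t,u_0) = u(t) - u_0 + tv_0$ as the first-order-in-time Taylor remainder. Setting
\[
V(w) := \nabla\cdot\bigl(w(1-w)\nabla(1-\Delta)^{-1}w\bigr) = (1-2w)\nabla S(w)\cdot\nabla w + w(1-w)\Delta S(w),
\]
with $S(w) = (1-\Delta)^{-1}w$, the equation \eqref{dme} reads $\partial_\tau u = -V(u)$, and by construction $v_0 = V(u_0)$. Hence the Newton-Leibniz formula gives
\[
h(t,u_0) = \int_0^t \partial_\tau u \, d\tau + tV(u_0) = \int_0^t \bigl[V(u_0) - V(u(\tau))\bigr]\,d\tau,
\]
so Minkowski's inequality reduces the problem to the Lipschitz-type bound
\[
\|V(u(\tau)) - V(u_0)\|_{B^{s-2}_{p,\infty}} \les \|u(\tau) - u_0\|_{B^{s-1}_{p,\infty}}.
\]
Once this is established, Proposition \ref{pro3.3} renders the integrand of size $\les \tau$, and integration from $0$ to $t$ yields the desired $t^2$.

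For the Lipschitz bound, I would rewrite $V(u) - V(u_0) = \nabla\cdot F$ where, expanding $u(1-u)\nabla S - u_0(1-u_0)\nabla S_0$ termwise,
\[
F = (u-u_0)\nabla S - (u-u_0)(u+u_0)\nabla S + u_0\,\nabla(S-S_0) - u_0^2\,\nabla(S-S_0).
\]
Since $\nabla$ is an $S^1$-multiplier, Lemma \ref{pro3.5} yields $\|V(u) - V(u_0)\|_{B^{s-2}_{p,\infty}} \les \|F\|_{B^{s-1}_{p,\infty}}$. Each of the four summands is then controlled using: (i) the Banach algebra property of $B^{s-1}_{p,\infty}$ under the hypothesis $s-1 > \tfrac{d}{p}$ (Remark \ref{re3}); (ii) the $S^{-1}$-multiplier bounds $\|\nabla S\|_{B^{s-1}_{p,\infty}} \les \|u\|_{B^{s-2}_{p,\infty}}$ and $\|\nabla(S-S_0)\|_{B^{s-1}_{p,\infty}} \les \|u-u_0\|_{B^{s-2}_{p,\infty}}$ from Lemma \ref{pro3.5}; and (iii) the uniform bounds $\|u(\tau)\|_{B^s_{p,\infty}}, \|u_0\|_{B^s_{p,\infty}} \les 1$ coming from \eqref{FYF} and Proposition \ref{pro3.2}. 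A representative estimate is
\[
\|u_0\,\nabla(S-S_0)\|_{B^{s-1}_{p,\infty}} \les \|u_0\|_{B^{s-1}_{p,\infty}}\,\|u-u_0\|_{B^{s-2}_{p,\infty}} \les \|u-u_0\|_{B^{s-1}_{p,\infty}},
\]
with the remaining three terms handled identically.

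The only mild subtlety is the constant $1$ appearing inside the factors $(1-u-u_0)$ and $(1-u_0)$, which fails to live in the nonhomogeneous Besov space $B^{s-1}_{p,\infty}$ when $p<\infty$. Pre-expanding those products before invoking the algebra estimate, as in the formula for $F$ above, sidesteps this entirely, since no constant function is ever asked to carry a Besov norm. Assembling the bounds gives
\[
\|h(t,u_0)\|_{B^{s-2}_{p,\infty}} \leq \int_0^t \|V(u(\tau))-V(u_0)\|_{B^{s-2}_{p,\infty}}\,d\tau \les \int_0^t \|u(\tau)-u_0\|_{B^{s-1}_{p,\infty}}\,d\tau \les \int_0^t \tau\,d\tau \les t^2,
\]
which is the claim.
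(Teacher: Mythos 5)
Your proof is correct and follows essentially the same route as the paper: write $h(t,u_0)=\int_0^t\bigl[V(u_0)-V(u(\tau))\bigr]\,\dd\tau$ via Newton--Leibniz, trade the outer divergence for one derivative using Lemma \ref{pro3.5}, establish the Lipschitz bound $\|V(u(\tau))-V(u_0)\|_{B^{s-2}_{p,\infty}}\les\|u(\tau)-u_0\|_{B^{s-1}_{p,\infty}}$ by the algebra property, and conclude with Proposition \ref{pro3.3}. Your termwise expansion of $F$ (avoiding placing a Besov norm on the constant $1$, which indeed fails to lie in $B^{s-1}_{p,\infty}$ for $p<\infty$) is a detail the paper leaves implicit, but it is the same argument.
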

\begin{proof}
Using  the Newton-Leibniz formula, Minkowski's inequality, Remark \ref{re3}, Lemma \ref{pro3.5} and \eqref{FYF}, we have
\begin{align*}
 \|h(t,u_{0})\|_{{B}_{p,\infty}^{s-2}}&\leq \int_0^t \|\partial_{\tau}u+v_{0}\|_{{B}_{p,\infty}^{s-2}}\dd\tau
 \\&\leq \int_0^t \|\nabla \cdot(u_0(1-u_0)\nabla S_0)-\nabla \cdot(u(1-u)\nabla S)\|_{{B}_{p,\infty}^{s-2}}\dd\tau
 \\&\les\int_0^t \|u_0(1-u_0)\nabla S_0-u(1-u)\nabla S\|_{{B}_{p,\infty}^{s-1}}\dd\tau\\
 &\les \int_0^t \|u(\tau)-u_{0}\|_{{B}_{p,\infty}^{s-1}}\dd\tau\\
 &\les\int_0^t \tau\dd\tau\\
 &\les t^2,
    \end{align*}
 where we have used Proposition \ref{pro3.3} in the last step.

We complete the proof of Proposition \ref{pro3.4}.
\end{proof}

Now we present the proof of Theorem \ref{d1}.\\
{\bf Proof of Theorem \ref{d1}.}\quad Notice that
$
u(t)-u_0=h(t,u_{0})-tv_0,
$
then
 \begin{align}\label{eme}
\nonumber\|u(t)-u_{0}\|_{{B}_{p,\infty}^{s}}&\geq2^{js}\|\Delta_{j}(h(t,u_{0})-tv_{0})\|_{L^{p}}
\nonumber\\&\geq 2^{js}t\|\Delta_{j}v_{0}\|_{L^{p}}-2^{js}\|\Delta_{j}h(t,u_{0})\|_{L^{p}}
\nonumber\\&\geq2^{js}t\|\Delta_{j}((1-2u_{0})\nabla S_{0}\cdot\nabla u_{0})\|_{L^{p}}
-2^{js}t\|\Delta_{j}(u_{0}^{2}\Delta S_{0})\|_{L^{p}}
\nonumber\\&
\quad-2^{js}t\|\Delta_{j}(u_{0}\partial_{x}^{2} S_{0})\|_{L^{p}}
-2^{js}\|\Delta_{j}h(t,u_{0})\|_{L^{p}}.
 \end{align}

It is not difficult to deduce that
\begin{align*}
&2^{js}t\|\Delta_{j}(u_{0}^{2}\Delta S_{0})\|_{L^{p}}\les t\|u_{0}^{2}\Delta S_{0}\|_{{B}_{p,\infty}^{s}}
\les t\|u_{0}\|_{{B}_{p,\infty}^{s}}^{3}\les t,\\
&2^{js}t\|\Delta_{j}(u_{0}\Delta S_{0})\|_{L^{p}}\les t\|u_{0}\Delta S_{0}\|_{{B}_{p,\infty}^{s}}
\les t\|u_{0}\|_{{B}_{p,\infty}^{s}}^{2}\les t
,\\
&2^{js}\|\Delta_{j}h(t,u_{0})\|_{L^{p}}\leq 2^{2j}\|h(t,u_{0})\|_{{B}_{p,\infty}^{s-2}}
\les t^{2}2^{2j}.
 \end{align*}

Gathering the above estimates together with {(\ref{eme})} yields
\begin{align}\label{z}
\nonumber\|u(t)-u_{0}\|_{{B}_{p,\infty}^{s}}&\geq2^{js}\|\Delta_{j}(h+tv_{0})\|_{L^{2}}
\nonumber\\&\geq 2^{js}t\|\Delta_{j}((1-2u_{0})\nabla S_{0}\cdot\nabla u_{0})\|_{L^{p}}
-Ct-Ct^{2}2^{2j}
\nonumber\\&\geq2^{js}t\|(1-2u_{0})\nabla S_{0}\cdot\Delta_{j}\nabla u_{0}\|_{L^{p}}\nonumber\\&
\quad-2^{js}t\|[\Delta_{j},(1-2u_{0})\nabla S_{0}]\cdot\nabla u_{0}\|_{L^{p}}-Ct-Ct^{2}2^{2j}.
\end{align}

On the one hand, by Lemma \ref{pro3.6}, we deduce
\begin{align}\label{z1}
2^{js}\|[\Delta_{j},(1-2u_{0})\nabla S_{0}]\cdot\nabla u_{0}\|_{L^{p}}\leq C.
\end{align}

On the other hand, we have
\begin{align}\label{ime}
2^{js}\|(1-2u_{0})(\nabla S_{0}\cdot\Delta_{j}\nabla u_{0})\|_{L^{p}}&= 2^{js}\f\|(1-2u_{0})\sum_{i=1}^{d}\partial_{x_{i}}S_{0}\Delta_{j}\partial_{x_{i}}u_{0}\g\|_{L^{p}}\geq J-K,
 \end{align}
 where
\begin{align*}
&J:=2^{js}\f\|(1-2u_{0})\partial_{x_{1}}S_{0}\Delta_{j}\partial_{x_{1}}u_{0}\g\|_{L^{p}},\\
&K:=2^{js}\sum_{i=2}^{d}\f\|(1-2u_{0})\partial_{x_{i}}S_{0}\Delta_{j}\partial_{x_{i}}u_{0}\g\|_{L^{p}}.
 \end{align*}

By Lemma \ref{pro3.1}, we infer
\begin{align}\label{nme}
J&=2^{-2j}\f\|(1-2u_{0})\partial_{x_{1}}S_{0}\partial_{x_{1}}(1-\Delta)f_{j}\g\|_{L^{p}}
\geq 2^{-2j}\f(J_{1}-J_{2}-J_{3}\g),
  \end{align}
  where
  \begin{align*}
&J_{1}:=\f\|(1-2u_{0})\partial_{x_{1}}S_{0}\partial^3_{x_{1}}f_{j}\g\|_{L^{p}},\\
&J_{2}:=\sum_{i=2}^d\f\|(1-2u_{0})\partial_{x_{1}}S_{0}\partial_{x_{1}}\pa^2_{x_i}f_{j}\g\|_{L^{p}},\\
&J_{3}:=\f\|(1-2u_{0})\partial_{x_{1}}S_{0}\partial_{x_{1}}f_{j}\g\|_{L^{p}}.
 \end{align*}

  We have
 \begin{align*}
\partial^3_{x_{1}}f_{j}(x)=-\f(\frac{17}{12}\g)^32^{3j}\phi(x_{1})\cos\f(\frac{17}{12}2^{j}x_{1}\g)\phi(x_{2})\cdot\cdot\cdot\phi(x_{d})+R,
    \end{align*}
 where
 \begin{align*}
R
&=\frac{17}{4}2^{j}\phi^{\prime\prime}(x_{1})\cos\f(\frac{17}{12}2^{j}x_{1}\g)\phi(x_{2})\cdot\cdot\cdot\phi(x_{d})\\
&
\quad-3\f(\frac{17}{12}\g)^22^{2j}\phi^{\prime}(x_{1})\sin\f(\frac{17}{12}2^{j}x_{1}\g)\phi(x_{2})\cdot\cdot\cdot\phi(x_{d})\\
&\quad+\phi^{\prime\prime\prime}(x_{1})\sin\f(\frac{17}{12}2^{j}x_{1}\g)\phi(x_{2})\cdot\cdot\cdot\phi(x_{d}).
    \end{align*}

 Obviously, $\f\|(1-2u_{0})\partial_{x_{1}}S_{0}R\g\|_{L^{p}}\leq C2^{2j}$, then
    \begin{align*}
J_{1}&\geq\f(\frac{17}{12}\g)^32^{3j}\f\|(1-2u_{0})\partial_{x_{1}}S_{0}\phi(x_{1})\cos\f(\frac{17}{12}2^{j}x_{1}\g)\phi(x_{2})\cdot\cdot\cdot\phi(x_{d})\g\|_{L^{p}}-C2^{2j}.
   \end{align*}

 By the construction of $f_n$, it is not difficult to deduce that
$$S_{0}(0)=\sum_{n=3}^{\infty}2^{-n(s+2)}f_{n}(0)=0.$$

By easy computations, we have
\begin{align*}
\Delta S_{0}(x)& =\partial_{x_{1}}^{2}S_{0}(x)+\sum_{i=2}^{d}\partial_{x_{i}}^{2}S_{0}(x)\\
&=\sum_{n=3}^{\infty}2^{-n(s+2)}\partial_{x_{1}}^{2}f_{n}+\sum_{i=2}^{d}\sum_{n=3}^{\infty}2^{-n(s+2)}\partial_{x_{i}}^{2} f_{n}.
 \end{align*}
 Noticing that the construction of $f_n$ again and using the fact $\phi'(0)=0$ from Remark \ref{re5}, then we obtain
 $$\Delta S_{0}(0)=0,$$
 which implies that
$$u_{0}(0)=S_{0}(0)-\Delta S_{0}(0)=0.$$

Since $(1-2u_{0})\partial_{x_{1}}S_{0}\phi(x_{1})\phi(x_{2})\cdot\cdot\cdot\phi(x_{d})$ is a real-valued and continuous function on $\R$, then there exists some $\delta>0$ such that for any $x\in B_{\delta}(0):=\{x\in\mathbb{R}^{d}:\;|x|\leq\delta\}$
\begin{align*}
&\f|[(1-2u_{0})\partial_{x_{1}}S_{0}\phi(x_{1})\phi(x_{2})\cdot\cdot\cdot\phi(x_{d})](x)\g|\nonumber\\
\geq&\; \fr{1}{2}\f|[(1-2u_{0})\partial_{x_{1}}S_{0}\phi(x_{1})\phi(x_{2})\cdot\cdot\cdot\phi(x_{d})](0)\g|\nonumber\\
=&\;\fr{1}{2}\phi^{d}(0)|\partial_{x_{1}}S_{0}(0)|\nonumber\\
=&\;\fr{17}{24}\phi^{2d}(0)\sum\limits^{\infty}_{n=3}2^{-n(s+1)}=:c_0>0.
\end{align*}
Thus we have for $j$ large enough
\begin{align*}
J_{1}&\geq c_02^{3j}\f\|\cos\f(\frac{17}{12}2^{j}x_{1}\g)\g\|_{L^{p}(B_{\delta}(0))}-C2^{2j}
\geq \tilde{c}_02^{3j}.
   \end{align*}

By direct computations, we can verify that
\begin{align*}
J_{2}+J_{3}\leq C2^{j}.
 \end{align*}

Thus, we have
 \begin{align}\label{jme}
J\geq C2^{j}.
 \end{align}

Similarly, we also have
 \begin{align}\label{kme}
K\leq C.
 \end{align}

Combining \eqref{jme} and \eqref{kme}, we have
\begin{align}\label{lme}
2^{js}t\|(1-2u_{0})\nabla S_{0}\cdot\Delta_{j}\nabla u_{0}\|_{L^{p}}&\geq C2^{j}t.
 \end{align}

Inserting \eqref{lme} and \eqref{z1} into \eqref{z}, we deduce that for large $j$
\begin{align*}
\|u(t)-u_{0}\|_{{B}_{p,\infty}^{s}}\geq C2^{j}t-Ct-C2^{2j}t^{2}\geq C2^{j}t-C2^{2j}t^{2}.
 \end{align*}

Thus, picking $t2^{j}\approx\epsilon_0$ with small $\epsilon_0$, we have
\begin{align*}
\|u(t)-u_0\|_{B^{s}_{p,\infty}}\geq C\epsilon_0-C\epsilon_0^2\geq c_1\epsilon_0.
\end{align*}
This completes the proof of Theorem \ref{d1}.

\vspace*{1em}
\noindent\textbf{Acknowledgments} The authors would like to express their gratitude to the anonymous referees for valuable suggestions and comments which greatly improved the paper.
Y. Yu is supported by NSF of China under Grant No. 12101011. M. Fei is supported by NSF of China under Grants No. 11871075 and 11971357.

\vspace*{1em}
\noindent\textbf{Data Availability} No data was used for the research described in the article.

\vspace*{1em}
\noindent\textbf{Conflict of interest}
The authors declare that they have no conflict of interest.

\end{document}